\newcommand\restr[2]{{
  \left.\kern-\nulldelimiterspace 
  #1 
  \vphantom{\big|} 
  \right|_{#2} 
  }}
\newtheorem{theorem}{Theorem}
\newtheorem{proposition}[theorem]{Proposition}
\newtheorem{lemma}[theorem]{Lemma}
\newtheorem{corollary}[theorem]{Corollary}
\theoremstyle{definition}
\newtheorem{definition}[theorem]{Definition}
\newtheorem{remark}[theorem]{Remark}
\newcommand{\RR}{{\mathbb R}}
\newcommand{\SSS}{{\mathbb S}}
\newcommand{\TT}{{\mathbb T}}
\begin{document}

\title[Convexity for torus actions on $b$-symplectic manifolds]{Convexity for Hamiltonian torus actions on $b$-symplectic manifolds}

\author{Victor Guillemin}\address{Department of Mathematics, MIT, Cambridge MA, USA}\email{vwg@math.mit.edu}
\author{Eva Miranda}\address{Department of Mathematics, Universitat Polit\`ecnica de Catalunya, Barcelona, Spain}\email{eva.miranda@upc.edu}\thanks{Eva Miranda is partially supported by the Ministerio de Econom\'{i}a y Competitividad project with reference MTM2015-69135-P}
\author{Ana Rita Pires}\address{Department of Mathematics, Fordham University, New York NY, US / School of Mathematics, Institute for Advanced Study, Princeton NJ, USA}\email{apissarrapires@fordham.edu,apires@ias.edu}\thanks{Ana Rita Pires was partially supported by a  Short Visit Grant from the European Science Foundation's ``Contact and Symplectic Topology'' network, and by the National Science Foundation under agreement number
DMS-1128155. Any opinions, findings and conclusions or recommendations expressed in this material are those of the authors and do not necessarily reflect the views of the National Science Foundation. }
\author{Geoffrey Scott}
\address{Department of Mathematics, University of Toronto, Toronto, Canada}
\email{gscott@math.utoronto.ca}
\date{\today}

\begin{abstract} In \cite{btoric} we proved that the moment map image of a $b$-symplectic toric manifold is a convex $b$-polytope. In this paper we obtain convexity results for the more general case of non-toric hamiltonian torus actions on $b$-symplectic manifolds. The modular weights of the action on the connected components of the exceptional hypersurface play a fundamental role: either they are all zero and the moment map behaves as in classic symplectic case, or they are all nonzero and the moment map behaves as in the toric $b$-symplectic case studied in \cite{btoric}.
\end{abstract}

\maketitle

\section{Introduction}

It is well-known that the moment map image of a hamiltonian torus action on a symplectic manifold is convex \cite{atiyah,guilleminsternberg,guilleminsternberg2}. When the action is toric, the Delzant theorem states that the symplectic toric manifold can be recovered from this moment map image.

The Delzant theorem has been generalized to the context of $b$-symplectic manifolds (also called log-symplectic manifolds), which are even-dimensional Poisson manifolds $(M^{2n}, \Pi)$ for which $\Pi^n$ intersects the zero section of $\Lambda^{2n}TM$ transversally. In this paper, we prove that the moment map image of a hamiltonian torus action on a  $b$-symplectic manifold is convex, even when the dimension of the torus is less than $n$. The proof uses techniques introduced in \cite{btoric} and is strongly inspired by the convexity proof of \cite{atiyah} in the classic symplectic setting. Since our proof uses the classic convexity theorem applied to the symplectic leaves on the singular locus of $\Pi$, we will assume that these leaves are compact (by a result in \cite{guimipi}, it is enough to assume that each component of the singular locus has a compact leaf).

Examples of these torus actions include integrable systems on $b$-symplectic manifolds (cf. \cite{KMS,KM}), and products of a toric $b$-surface (cf. \cite[\S 3]{btoric}) with a symplectic manifold endowed with a Hamiltonian torus action. More generally, consider the construction of a $b$-symplectic toric manifold as a fibration of $b$-symplectic toric surfaces over a standard symplectic toric manifold described in \cite[Remark 39]{btoric}. To obtain instead a $b$-symplectic manifold with a torus action, we can replace the toric base by one with a Hamiltonian action of a torus of smaller dimension, for instance, an almost toric manifold \cite{leung,Karshon,KarshonTolman}.

This paper is organized as follows: in Section \ref{section prelim} we recall the necessary definitions and properties of $b$-symplectic manifolds, in Sections \ref{section nonzero} and \ref{subsec:allzero} respectively we examine the cases of torus actions for which the modular weights of the various connected components of the exceptional hypersurface are all nonzero or all zero. Finally, in Section \ref{sec:noncoexistence} we prove that these two extreme cases are the only possible ones and we conclude with some final remarks on possible further directions in Section \ref{section final}.

\section{Preliminaries}\label{section prelim}

In this section we recall definitions and properties related to $b$-symplectic manifolds. A more detailed exposition can be found in \cite{guimipi,guimipi12}.

Let $(M^{2n},\Pi)$ be a Poisson manifold. If the map
$$p\in M\mapsto(\Pi(p))^n\in\Lambda^{2n}(TM)$$
is transverse to the zero section, then we say that $\Pi$ is a \textbf{$b$-Poisson structure} and that $Z=\{p\in M|(\Pi(p))^n=0\}$
is the \textbf{exceptional hypersurface}. The symplectic foliation of a $b$-Poisson structure has an open symplectic leaf for each component of $M \backslash Z$. The exceptional hypersurface $Z$ is a Poisson submanifold of $(M^{2n}, \Pi)$ and is foliated by symplectic leaves of dimension $2n-2$. There exists a Poisson vector field $v$ on $Z$ transverse to the symplectic foliation. If the symplectic foliation of a component $Z'$ of $Z$ contains a compact leaf $L$, then $Z'$ is the mapping torus of the symplectomorphism $\phi:L\to L$ determined by the flow of $v$.

It is possible to study $b$-Poisson manifolds using symplectic techniques by replacing the tangent and cotangent bundle by the $b$-tangent and $b$-cotangent bundles. A \textbf{$b$-manifold} is a pair $(M, Z)$, where $M$ is an oriented smooth manifold and $Z$ is a closed embedded hypersurface in $M$. A vector field $v$ is a \textbf{$b$-vector field} if $v_p \in T_pZ$ for all $p \in Z$. The $b$-tangent bundle is defined as the vector bundle whose sections are $b$-vector fields, and the \textbf{$b$-cotangent bundle} ${^b}T^*M$ is the dual $({^b}TM)^*$. Sections of $\Lambda^p({^b}T^*M)$ are called \textbf{$b$-forms} and the set of $b$-forms is denoted by ${^b}\Omega^p(M)$. The standard de Rham differential extends to $b$-forms.

A \textbf{$b$-symplectic form} is a closed nondegenerate $b$-form of degree 2. It can be thought of as a smooth symplectic form on $M$ with a mild singularity along $Z$; inverting this singular form gives a $b$-Poisson structure. Thus, $b$-Poisson structures can be seen as symplectic structures modeled over a Lie algebroid (the $b$-tangent bundle). We remind the reader that all $b$-symplectic manifolds in this paper are assumed to have a compact symplectic leaf in each component of their exceptional hypersurface.

Just as we allow our differential forms on a $b$-manifold to have mild singularities, it is also helpful to allow functions to have mild singularities. The sheaf ${^b}C^{\infty}(M)$ of \textbf{$b$-functions} on $M$ consists of functions which are smooth on $M \backslash Z$, and locally around a point on $Z$ can be written in the form $c\,\textrm{log}|y| + g$, where $c \in \mathbb{R}$, $y$ is a local defining function for $Z$, and $g$ is a smooth function.

\begin{definition}
An action of $\mathbb{T}^k$ on a $b$-symplectic manifold $(M,\omega)$ is \textbf{Hamiltonian} if for all $X,Y\in\mathfrak{t}$:
\begin{itemize}
\item the one-form $\iota_{X^\#}\omega$ is exact, i.e., has a primitive $H_X\in\,^bC^\infty(M)$;
\item $\omega(X^\#, Y^\#)=0$.
\end{itemize}
where $X^{\#}$ is the fundamental vector field for the action associated to $X\in \mathfrak{t}$.
\end{definition}

When a $b$-function $f\in C^\infty(M)$ is expressed as $c\log|y|+g$ locally near some point of a component $Z'$ of $Z$, the number $c_{Z'}(f) := c \in \mathbb{R}$ is uniquely determined by $f$, even though the functions $y$ and $g$ are not.

\begin{definition}
Given a Hamiltonian $\mathbb{T}^k$-action on a $b$-symplectic manifold, the \textbf{modular weight} of a connected component $Z'$ of $Z$ is the map $v_{Z'}:\mathfrak{t}\to \mathbb{R}$ given by $v_{Z'}(X)=c_{Z'}(H_X)$. This map is linear and therefore we can regard it as an element of the dual of the Lie algebra $v_{Z'}\in\mathfrak{t}^*$. We denote the kernel of $v_{Z'}$ by $\mathfrak{t}_{Z'}$.
\end{definition}

\begin{definition}
The \textbf{adjacency graph} of a $b$-manifold $(M,Z)$ is a graph $G=(V,E)$  with a vertex for each component of $M\setminus Z$ and an edge for each connected component of $Z$. The endpoints of the edge corresponding to $Z' \subseteq Z$ are the vertices corresponding to the components of $M\setminus Z$ that $Z'$ separates. Multiple edges may have the same pair of endpoints. The \textbf{weighted adjacency graph} of a $b$-symplectic manifold endowed with a hamiltonian $\mathbb{T}^k$-action is $\mathcal{G}=(G,w)$ where $G=(V,E)$ is the adjacency graph of $(M,Z)$ and the weight function $w:E\to\mathfrak{t}^*$ sends each edge to the modular weight of the corresponding component of $Z$.
\end{definition}

A group action is \textbf{effective} if no non-trivial subgroup acts trivially. A \textbf{toric} action is an effective $\mathbb{T}^k$ action on a manifold $M$ such that $k=\frac{1}{2}\dim(M)$. For a toric Hamiltonian action on a $b$-symplectic manifold, the modular weight of each connected component of $Z$ is nonzero (\cite[Claim 13]{btoric}). This will not necessarily be the case for general torus actions that the present paper examines. Indeed, the cases of zero and the nonzero modular weights are very different and we will study them separately.

\begin{remark}\label{rmk:effective} Without loss of generality we will assume that $\mathbb{T}^k$ acts effectively on $M$. Otherwise, we can take the quotient of $\mathbb{T}^k$ by the kernel of its action on $M$ to get an effective action of a quotient group, which will still be a torus \cite[Proposition I.1.7]{audin}.
\end{remark}

\section{The case of nonzero modular weights}\label{section nonzero}

In this section we prove that if the modular weight (corresponding to a Hamiltonian $\mathbb{T}^k$ action) of a connected component of $Z$ is nonzero, then the action and the moment map must ``split'' in a fashion similar to the toric case studied in \cite{btoric}, as detailed in Proposition~\ref{prop:localnonzero}. Indeed, many of the results and proofs in this section follow the results and proofs in that paper.

\begin{proposition}\label{prop:localnonzero}
Let $(M,\omega)$ be a $b$-symplectic manifold endowed with an effective Hamiltonian $\mathbb{T}^k$-action such that the modular weight $v_{Z_i}$ for a connected component $Z_i$ of the exceptional hypersurface is nonzero. Then:
\begin{enumerate}[(a)]
\item for $X$ a representative of a primitive lattice vector of $\mathfrak{t}/\mathfrak{t}_{Z_i}$ that pairs positively with $v_{Z_i}$, we have $\langle X,v_{Z_i}\rangle=c$, where $c$ is the modular period of $Z_i$;

\item $Z_i\simeq \mathcal{L}\times\mathbb{S}^1$, where $\mathcal{L}_i$ is a symplectic leaf of $Z_i$;

\item in a neighborhood of $Z_i$ there is a splitting $\mathfrak{t}\simeq\mathfrak{t}_{Z_i}\times\langle X \rangle$, which induces a splitting $\mathbb{T}^k\simeq\mathbb{T}^{k-1}_{Z_i}\times\mathbb{S}^1$. The $\mathbb{T}^{k-1}_{Z_i}$-action on $Z_i$ induces a Hamiltonian $\mathbb{T}^{k-1}_{Z_i}$-action on $\mathcal{L}_i$, let $\mu_{\mathcal{L}_i}:\mathcal{L}_i\to\mathfrak{t}^*_{Z_i}$ be its moment map;
\item\label{theonethatineed} there is a neighborhood $\mathcal{L}_i\times\mathbb{S}^1\times(-\varepsilon,\varepsilon)\simeq\mathcal{U}\subset M$ of $Z_i$ such that the $(\mathbb{T}^{k-1}_{Z_i}\times\mathbb{S}^1)$-action on $\mathcal{U}\setminus Z_i$ is given by $(g,\theta)\cdot(\ell,\rho,t)=(g\cdot\ell,\rho+\theta,t)$ and has moment map
\begin{equation*}\begin{aligned}
\mu_{\mathcal{U}\setminus Z_i}:\mathcal{L}_i\times\mathbb{S}^1\times((-\varepsilon,\varepsilon)\setminus\{0\})&\to\mathfrak{t}^*\simeq\mathfrak{t}^*_{Z_i}\times\mathbb{R}\\
(\ell,\rho,t)&\mapsto(\mu_{\mathcal{L}_i}(\ell),c\log|t|).
\end{aligned}\end{equation*}
\end{enumerate}
\end{proposition}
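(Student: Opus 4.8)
The plan is to establish each part in turn, using the structure theory of $b$-symplectic manifolds near $Z_i$ together with the classical equivariant Darboux/Moser machinery adapted to the $b$-setting, closely following the toric analysis in \cite{btoric}.

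For part (a), I would start from the observation that since $v_{Z_i}\neq 0$, the quotient lattice $\mathfrak{t}/\mathfrak{t}_{Z_i}$ is one-dimensional; let $X$ be a lift of a primitive generator pairing positively with $v_{Z_i}$. The effectiveness of the action forces the circle subgroup generated by (a suitable representative of) $X$ to act nontrivially on the mapping-torus direction of $Z_i$, and the defining relation $v_{Z_i}(X)=c_{Z_i}(H_X)$ together with the fact that the modular period $c$ is exactly the "length" of the mapping torus in the Poisson vector field's flow gives $\langle X,v_{Z_i}\rangle = c$. Concretely, one pulls back the local normal form $\omega = \tfrac{dt}{t}\wedge d\theta + \omega_{\mathcal{L}}$ near $Z_i$ (valid because $Z_i$ has a compact leaf, so it is a mapping torus) and computes that $\iota_{X^\#}\omega$ has a primitive whose logarithmic coefficient along $Z_i$ is precisely the period of the $\mathbb{S}^1$ generated by $X$, which after normalizing to a primitive lattice vector equals $c$.

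Part (b) follows from the mapping-torus description: the flow of the Poisson vector field $v$ on $Z_i$ is, up to reparametrization, the restriction of the $X^\#$-action, and since this generates a genuine \emph{circle} action (not just an $\mathbb{R}$-action) on $Z_i$ with no fixed points, the monodromy symplectomorphism $\phi$ is isotopic to the identity through symplectomorphisms, so $Z_i\simeq\mathcal{L}_i\times\mathbb{S}^1$. Part (c) is then essentially bookkeeping: the splitting $\mathfrak{t}\simeq\mathfrak{t}_{Z_i}\times\langle X\rangle$ comes from choosing a complementary lattice line to $\mathfrak{t}_{Z_i}$ through $X$; the second condition in the definition of Hamiltonian action ($\omega(X^\#,Y^\#)=0$) guarantees that the residual $\mathbb{T}^{k-1}_{Z_i}$-action on $Z_i$ preserves each leaf $\mathcal{L}_i$ and is Hamiltonian there, with moment map $\mu_{\mathcal{L}_i}$ obtained by restricting the components $H_Y$, $Y\in\mathfrak{t}_{Z_i}$, which are genuinely smooth (no log term) since $Y\in\ker v_{Z_i}$.

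The main work, and the main obstacle, is part \eqref{theonethatineed}: producing the equivariant tubular neighborhood $\mathcal{U}\simeq\mathcal{L}_i\times\mathbb{S}^1\times(-\varepsilon,\varepsilon)$ in which both the action and the $b$-symplectic form take the stated product normal form. I would invoke the equivariant $b$-Moser/Darboux theorem near $Z_i$ (the $b$-analogue used in \cite{btoric}) to write $\omega = c\,\tfrac{dt}{t}\wedge d\rho + \pi^*\omega_{\mathcal{L}_i}$ on $\mathcal{U}$ with the $(\mathbb{T}^{k-1}_{Z_i}\times\mathbb{S}^1)$-action in the product form $(g,\theta)\cdot(\ell,\rho,t)=(g\cdot\ell,\rho+\theta,t)$; the compatibility of this normal form with the given effective Hamiltonian action is where the hypothesis $v_{Z_i}\neq 0$ is essential, since it is what lets the $\mathbb{S}^1$ factor be "the mapping-torus circle." Once the normal form is in hand, one simply reads off the moment map: $\iota_{\partial_\rho}\omega = c\,\tfrac{dt}{t} = d(c\log|t|)$ gives the $\mathbb{R}$-component $c\log|t|$, while for $Y\in\mathfrak{t}_{Z_i}$ the vector field $Y^\#$ is tangent to the $\mathcal{L}_i$ factor and $\iota_{Y^\#}\omega = \pi^*(\iota_{Y^\#}\omega_{\mathcal{L}_i}) = d(\mu_{\mathcal{L}_i}(\ell)(Y))$, yielding the $\mathfrak{t}^*_{Z_i}$-component $\mu_{\mathcal{L}_i}(\ell)$. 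Checking that these primitives patch to the globally-defined $H_X$, $H_Y$ up to constants, and fixing those constants consistently, completes the argument.
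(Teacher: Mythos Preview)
Your proposal is correct and follows essentially the same route as the paper: the paper's proof is simply the one-line observation that, given effectiveness (Remark~\ref{rmk:effective}), the arguments of \cite[Proposition~15, Corollary~16, Lemma~17, Proposition~18]{btoric} carry over verbatim from the toric to the sub-toric setting, and your outline is precisely a recapitulation of what those four results say and how they fit together. The only cosmetic difference is that you sketch the content of each cited result explicitly, whereas the paper just points to them.
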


\begin{proof}
In light of Remark~\ref{rmk:effective}, the proofs of \cite[Proposition 15, Corollary 16, Lemma 17, Proposition 18]{btoric} for the toric case hold in this context with the obvious changes.
\end{proof}

From the splitting in the local model we conclude, as in the toric case, that the moment image of $\mathcal{U}\setminus Z_i$ where $\mathcal{U}$ is a neighborhood of a connected component of $Z_i$ is of the form $\Delta_{\mathcal{L}_i}\times (-\infty,N)$ for some $N\in\mathbb{R}$, where the convex polytope $\Delta_\mathcal{L}\subset\mathfrak{t}^*_{Z_i}$ is the image of $\mu_{\mathcal{L}_i}$.

\begin{proposition}
Let $(M,\omega)$ be a $b$-symplectic manifold endowed with an effective Hamiltonian $\mathbb{T}^k$-action and let $Z_1$ and $Z_2$ be two connected components of the exceptional hypersurface $Z$ which correspond to edges incident on the same vertex of the adjacency graph of $(M,\omega)$. If the modular weights of $Z_1$ and $Z_2$ are both nonzero, then they are a negative scalar multiple of each other, and there are no other edges incident to that vertex on the adjacency graph. Therefore $\mathfrak{t}_{Z_1}=\mathfrak{t}_{Z_2}$.

In particular, if the modular weights of all connected components of $Z$ are nonzero, then the adjacency graph of $M$ is either a line with any number of vertices or a circle with an even number of vertices and in both cases $\mathfrak{t}_Z:=\mathfrak{t}_{Z_i} \,\,\forall_i$ is well-defined.
\end{proposition}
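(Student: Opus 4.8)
My plan is to fix the vertex and work inside the corresponding component $M_v$ of $M\setminus Z$. There $\omega$ is an honest symplectic form, the $\mathbb{T}^k$-action is genuinely Hamiltonian, $\mu|_{M_v}$ is a smooth moment map, and $Z_1,Z_2$ — together with any other component $Z_j$ of $Z$ touching $\partial M_v$ — appear as ends of $M_v$. Since $M$ is compact and, by Proposition~\ref{prop:localnonzero}(d), $H_X|_{M_v}$ is exhausting near every end of nonzero modular weight, $\mu|_{M_v}$ is proper onto its image, so the argument of \cite{atiyah} applies to $(M_v,\omega)$: $\mu(M_v)$ is a closed convex polyhedron and the fibers of $\mu|_{M_v}$ are connected. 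Two local facts will be used repeatedly. If $Z_j\subset\partial M_v$ has nonzero modular weight, Proposition~\ref{prop:localnonzero}(d) supplies a ``cylindrical end'': $\mu(\mathcal{U}_j\setminus Z_j)=\Delta_{\mathcal{L}_j}\times(-\infty,N_j)$, with recession direction $-v_{Z_j}$ and bounded, fixed cross-section $\Delta_{\mathcal{L}_j}$; moreover $H_X\to-\infty$ along $M_v$ near $Z_j$ precisely when $\langle X,v_{Z_j}\rangle>0$. If $Z_j\subset\partial M_v$ has modular weight zero, then every $H_X$ is bounded near $Z_j$, so $\mu$ extends continuously across the compact hypersurface $Z_j$ and $\mu(Z_j)$ is compact.

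First I would show that the modular weights of all components of $Z$ touching $\partial M_v$ are pairwise proportional; in particular $v_{Z_1}\parallel v_{Z_2}$, hence $\mathfrak{t}_{Z_1}=\mathfrak{t}_{Z_2}=:\mathfrak{t}_Z$. If instead $v_{Z_1}$ and $v_{Z_2}$ were independent, choose $X\in\mathfrak{t}$ pairing strictly positively with both. Then $\mu(M_v)$ contains both of their cylindrical ends, with non-collinear recession directions, so by convexity it contains the two-dimensional wedge they span, and therefore the affine slice $\mu(M_v)\cap\{\langle X,\cdot\rangle=-T\}$ is unbounded for all large $T$. But by Proposition~\ref{prop:localnonzero}(d) and compactness of $M$, for $T$ large this slice lies in the union of the finitely many bounded slices of the cylindrical ends attached to those $Z_j$ with $\langle X,v_{Z_j}\rangle>0$ — a contradiction.

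Next I would pin down the sign and clear the vertex of other edges. Write $v_{Z_2}=\lambda v_{Z_1}$ with $\lambda\neq 0$ and suppose $\lambda>0$; by Proposition~\ref{prop:localnonzero}(a) the primitive generators coincide, $X_1=X_2$, and $H_{X_1}\to-\infty$ along $M_v$ near both $Z_1$ and $Z_2$. For $a\ll 0$ a regular value, the local models plus compactness of $M$ identify $H_{X_1}^{-1}(a)$ as a compact hypersurface of $M_v$ having one connected component inside $\mathcal{U}_1$ and a disjoint one inside $\mathcal{U}_2$. This contradicts the connectedness half of the Atiyah argument applied to the Hamiltonian circle action generated by $X_1$: critical submanifolds of a circle moment map are symplectic, hence of even index and coindex, so two distinct components of a regular level set can never merge as the value increases — impossible with $M_v$ connected. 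Hence $\lambda<0$, so $v_{Z_1}$ and $v_{Z_2}$ are a negative multiple of one another. Since every modular weight on $\partial M_v$ is now a multiple of $v_{Z_1}$ and $\lambda<0$, the recession cone of $\mu(M_v)$ contains the whole line $\mathbb{R}\, v_{Z_1}$, so $\mu(M_v)$ is affinely a product $K\times\mathbb{R}$ with $K$ a bounded polytope. Such a polyhedron has exactly two ends, both cylindrical (used up by $Z_1$ and $Z_2$), and all of its proper faces are unbounded: a third component of nonzero modular weight would force a third cylindrical end — equivalently a $\mu|_{M_v}$-fiber far out along $\mathbb{R}\, v_{Z_1}$ meeting two of the disjoint collars, contradicting connectedness of fibers — and a component of zero modular weight would make its compact image $\mu(Z_j)$ an unbounded face of $K\times\mathbb{R}$. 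Both are impossible, so $Z_1$ and $Z_2$ are the only edges at the vertex.

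For the final assertion, when every component of $Z$ has nonzero modular weight every vertex has degree exactly two — at least one because $M$ is connected and $Z\neq\emptyset$, at most two by what was just proved — so the connected adjacency graph is a path or a cycle; traversing a cycle, the weights are all multiples of a fixed one and, by the negative-multiple property, their signs alternate edge by edge, which closes up consistently only when the number of edges, equivalently of vertices, is even. In every case all the $\mathfrak{t}_{Z_i}$ agree, so $\mathfrak{t}_Z$ is well defined. The step I expect to be the main obstacle is the bookkeeping at infinity: extracting from Proposition~\ref{prop:localnonzero}(d) and compactness of $M$ the precise assertions that far out in a fixed direction $\mu(M_v)$ is exactly the union of the cylindrical ends of the relevant $Z_j$, and that $H_{X_1}^{-1}(a)$ for $a\ll 0$ is exactly the disjoint union of the model level sets — that is, controlling how the zero-modular-weight ends break the properness of $\mu|_{M_v}$. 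Everything else is the classical Atiyah--Guillemin--Sternberg package applied to the open symplectic manifold $M_v$, or elementary convex geometry.
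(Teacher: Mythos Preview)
The paper's proof is a one-line citation: ``The discussion and proofs from \cite[Section 4.2]{btoric} hold.'' Your proposal is essentially an unpacking of what that cited argument contains --- working on the open symplectic piece $M_v$, using the cylindrical-end description from Proposition~\ref{prop:localnonzero}(\ref{theonethatineed}) to control the moment image at infinity, and invoking the Atiyah connectedness/convexity package to force the recession cone to be a single line and hence at most two ends. The negative-sign argument via disconnected deep level sets of $H_{X_1}$, and the path/even-cycle conclusion via alternating signs, are exactly the mechanisms at work in \cite{btoric}; so substantively your route coincides with the paper's.

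The one place your argument is genuinely weaker than the citation is the sub-case you yourself flag: a hypothetical component $Z_j\subset\partial M_v$ with \emph{zero} modular weight. Two of your claims there do not stand on their own. First, the assertion that $\mu|_{M_v}$ is proper onto its image, hence that Atiyah applies to $(M_v,\omega)$, fails precisely in the presence of such a $Z_j$: a sequence in $M_v$ converging to a point of $Z_j$ has bounded $\mu$-values but no limit in $M_v$. Extending $\mu$ continuously across $Z_j$ does not repair this, since $M_v\cup Z_j$ is not symplectic and the Morse--Bott analysis underlying connectedness of fibers is unavailable there. Second, the sentence ``a component of zero modular weight would make its compact image $\mu(Z_j)$ an unbounded face of $K\times\mathbb{R}$'' is unjustified: nothing forces $\mu(Z_j)$ to lie in $\partial\mu(M_v)$, let alone to be a face. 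In \cite{btoric} this issue never arises because the toric hypothesis already forces every modular weight to be nonzero; in the present paper the exclusion of zero-weight neighbors is handled independently by the Theorem in Section~\ref{sec:noncoexistence}, whose proof does not rely on this proposition. So the statement is true as written, but your direct argument for that particular clause has a gap --- the honest fix is either to assume (as the ``in particular'' clause does) that all modular weights are nonzero, or to invoke Section~\ref{sec:noncoexistence} a posteriori.
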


\begin{proof}
The discussion and proofs from \cite[Section 4.2]{btoric} hold.
\end{proof}

This allows us to define a moment map for a Hamiltonian torus action on a $b$-symplectic manifold $(M, \omega)$ in the case when the modular weight corresponding to each component of $Z$ is nonzero. Its codomain is the $b$-manifold $(\mathcal{R}_\mathcal{G},\mathcal{Z}_\mathcal{G})$, where $\mathcal{G}=(G=(V,E),w)$ is the weighted adjacency graph, and
\begin{align*}
\mathcal{R}_\mathcal{G} &= \mathfrak{t}^* \times V \sqcup \mathfrak{t}_Z^* \times E\\
\mathcal{Z}_\mathcal{G} &= \hspace{1.5cm} \mathfrak{t}_Z^* \times E.
\end{align*}
This space can be endowed with a smooth structure that depends on the weight function $w$, the technical details are the same as in \cite[Section 5]{btoric}.

\begin{definition}
Let $(M,\omega)$ be a $b$-symplectic manifold endowed with an effective Hamiltonian $\mathbb{T}^k$-action such that that the modular weights for the connected components of the exceptional hypersurface are all nonzero. A smooth $\mathbb{T}^k$-invariant $b$-map $\mu:M\to\mathcal{R}_\mathcal{G}$ is a \textbf{$b$-moment map} for the action if the map $\mathfrak{t}\ni X\mapsto \mu^X\in C^\infty(M)$ given by $\mu^X(p):=\langle \text{pr}_1\circ\mu(p),X\rangle$ is linear and
$$\iota_{X^\#}\omega=d\mu^X,$$
where $X^\#$ is the vector field on $M$ generated by $X$, where $\textrm{pr}_1$ is the projection $\mathcal{R}_\mathcal{G} \backslash \mathcal{Z}_\mathcal{G} \rightarrow \mathfrak{t}^*$.
\end{definition}

We recover from \cite[Definitions 29 and 30]{btoric} the notion of a \textbf{$b$-polytope} in $\mathcal{R}_\mathcal{G}$: it is a bounded subset $P$ that intersects each component of $Z_\mathcal{G}$ and can be expressed as a finite intersection of half-spaces. Note that there are two types of hyperplanes in $\mathcal{R}_\mathcal{G}$ which divide $\mathcal{R}_\mathcal{G}$ into two connected components and therefore give rise to (two) half-spaces: hyperplanes that are perpendicular to a vector in $\mathfrak{t}\setminus\mathfrak{t}_Z$ and are completely contained in $\mathfrak{t}^*\times\{v\}$ for some vertex $v$ of $G$, and hyperplanes which are perpendicular to a vector in $\mathfrak{t}_Z$ and intersect each component of $\mathcal{Z}_\mathcal{G}$.

\begin{proposition}
Let $(M,\omega)$ be a $b$-symplectic manifold endowed with an effective Hamiltonian $\mathbb{T}^k$-action such that that the modular weights for the connected components of the exceptional hypersurface are \underline{all nonzero}, and let $\mathcal{G}$ be its weighted adjacency graph. Then:
\begin{enumerate}[(a)]
\item there exists a $b$-moment map $\mu:M\to\mathcal{R}_\mathcal{G}$;
\item the image of this map is a $b$-polytope in $\mathcal{R}_\mathcal{G}$;
\item the vertices of the $b$-polytope are precisely the fixed points of the $\mathbb{T}^k$-action.
\end{enumerate}
\end{proposition}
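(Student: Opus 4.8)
The plan is to adapt the strategy of Atiyah's proof and of the toric $b$-symplectic case in \cite{btoric}, splicing together the local model near the exceptional hypersurface (Proposition~\ref{prop:localnonzero}) with the classical convexity theorem on the open symplectic leaves. Concretely, I would proceed as follows.

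\emph{Existence of the $b$-moment map.} On each open piece $M \setminus Z$ the $b$-symplectic form is an honest symplectic form, so $\iota_{X^\#}\omega = dH_X$ has a smooth primitive away from $Z$, and the $b$-function requirement is exactly what pins down the logarithmic singularity along each $Z_i$ with coefficient $v_{Z_i}(X)$. Using Proposition~\ref{prop:localnonzero}\eqref{theonethatineed}, in a neighborhood $\mathcal{U}_i \simeq \mathcal{L}_i \times \mathbb{S}^1 \times (-\varepsilon,\varepsilon)$ the moment map on $\mathcal{U}_i \setminus Z_i$ is forced to be $(\mu_{\mathcal{L}_i}(\ell), c\log|t|)$, so it extends continuously (in the $b$-manifold sense) across $Z_i$ to the value $\mu_{\mathcal{L}_i}(\ell) \in \mathfrak{t}_Z^* \times E \subset \mathcal{R}_\mathcal{G}$. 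Patching these local models over all components of $Z$ and all open leaves, and using the adjacency-graph description of $\mathcal{R}_\mathcal{G}$, produces the global $\mathbb{T}^k$-invariant smooth $b$-map $\mu : M \to \mathcal{R}_\mathcal{G}$; effectiveness and the preceding proposition guarantee that $\mathfrak{t}_Z$ is well-defined and the target space is the right one. This is essentially bookkeeping given the earlier results.

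\emph{Convexity of the image and identification of the vertices.} This is where the real work is. The image decomposes according to the strata of $\mathcal{R}_\mathcal{G}$: over each open leaf $M_v$ (a genuine symplectic manifold, possibly noncompact with cylindrical ends over the adjacent $Z_i$'s), and over each edge $Z_i$. On the $Z_i$-strata, part (b) follows from Atiyah's theorem applied to the compact leaf $\mathcal{L}_i$ with its Hamiltonian $\mathbb{T}^{k-1}_{Z_i}$-action: the image is the convex polytope $\Delta_{\mathcal{L}_i}$. For an open leaf $M_v$, the local model shows the moment image near each boundary hypersurface is a product $\Delta_{\mathcal{L}_i} \times (\text{half-line})$, so $M_v$ is, away from compact part, a union of such cylindrical ends; one then runs the Atiyah argument (connectedness of fibers via Morse theory on the components of $\mu^X$, local convexity at each point) directly on $M_v$, taking care that the noncompactness is "tame" precisely because of the product structure of the ends given by Proposition~\ref{prop:localnonzero}. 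The facets of the resulting $b$-polytope coming from the $\mathcal{Z}_\mathcal{G}$-hyperplanes are exactly the images $\Delta_{\mathcal{L}_i}$ over the edges; the facets from the $\mathfrak{t}^*\times\{v\}$-type hyperplanes come from the classical local-convexity analysis at fixed points in $M_v$. For part (c), the fixed point set of $\mathbb{T}^k$ lies entirely in $M \setminus Z$ (the $\mathbb{S}^1$-factor of the local splitting acts freely near $Z_i$), so this reduces to the classical statement that the vertices of the image of a symplectic Hamiltonian torus action are the images of the fixed points, applied leaf by leaf and then assembled.

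\emph{Main obstacle.} The hard part is making the Atiyah-type argument rigorous on the \emph{noncompact} open leaves $M_v$: the classical proof uses compactness to guarantee that $\mu^X$ attains its extrema and that level sets are compact and connected. Here the ends are cylinders $\mathcal{L}_i \times \mathbb{S}^1 \times [0,\varepsilon)$ on which $\mu^X$ behaves like $c\log|t|$ along the $\mathfrak{t}\setminus\mathfrak{t}_Z$ direction and is pulled back from the compact $\mathcal{L}_i$ in the $\mathfrak{t}_Z$ directions, so one must check that this controlled behavior at infinity (properness after compactifying $\mathcal{R}_\mathcal{G}$, or a direct argument that the closure of the image is locally polyhedral and the map is "almost proper") lets the local-to-global convexity mechanism go through — and that the pieces over vertices and edges glue into a single $b$-polytope rather than merely a stratified-convex set. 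I expect this gluing/properness issue, rather than any single local computation, to be the crux, and it is handled exactly as in \cite{btoric} once the local model of Proposition~\ref{prop:localnonzero} is in hand.
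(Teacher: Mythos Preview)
Your overall architecture matches the paper's: part (a) is bookkeeping from the local model and the construction of $\mathcal{R}_\mathcal{G}$ as in \cite{btoric}, and for (c) you correctly observe that the $\mathbb{S}^1$-factor acts freely near each $Z_i$, so fixed points lie in $M\setminus Z$ and the statement reduces to the classical one. The genuine difference is in part (b). You propose to run Atiyah's Morse-theoretic argument \emph{directly} on each noncompact open leaf $M_v$, relying on the cylindrical-end structure to tame the behavior at infinity. The paper instead sidesteps the noncompactness entirely via \emph{symplectic cutting}: near each $Z_i$ the local model of Proposition~\ref{prop:localnonzero}(\ref{theonethatineed}) furnishes a circle action with Hamiltonian $c\log|t|$, so one cuts $M$ at an arbitrarily negative level to produce compact connected symplectic manifolds, applies the classical convexity theorem there, and then glues the resulting polytopes to the product-type $b$-polytopal neighborhoods of $\mu(Z)$ (letting the cut approach $Z$ shows the pieces match, with recession cones given by the modular weights). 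Your route is in principle viable---convexity results for proper moment maps on noncompact manifolds exist---but it leaves you to verify properness and fiber-connectedness by hand on each $M_v$, whereas the symplectic-cut trick reduces instantly to the compact case and makes the gluing into a single $b$-polytope transparent. Your closing remark that ``it is handled exactly as in \cite{btoric}'' is right in conclusion but not in mechanism: both \cite{btoric} and the present paper use the cut, not a direct noncompact Atiyah argument.
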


\begin{proof}
\begin{enumerate}[(a)]
\item The proofs of \cite[Proposition 26 and Theorem 27]{btoric} hold.

\item Using the local model of Proposition~\ref{prop:localnonzero}(\ref{theonethatineed}) we see that locally near $\mu(Z)$ the moment image is a $b$-polytope. By performing symplectic cutting on $M$ arbitrarily close to each connected component of $Z$ we can construct a collection of compact connected symplectic manifolds, whose moment images are, by the convexity results for classic symplectic manifolds, convex polytopes. We note that symplectic cutting is possible: it can be done using the component of the moment map which in the local coordinates of Proposition~\ref{prop:localnonzero}(\ref{theonethatineed}) is of the form $c\log|t|$ (recall that $t$ is a defining function for that particular  connected component of $Z$). The image $\mu(M)$ is the union of these polytopes with the $b$-polytopal neighborhoods of $\mu(Z)$, which produces a $b$-polytope. As the location of the symplectic cuts approaches $Z$, the polytopes enlarge; the limit is a polytope with recession cone equal to the modular weight vectors corresponding to the adjacent components of $Z$ (this can be seen using the description of a moment map near $Z$ from Proposition~\ref{prop:localnonzero}(\ref{theonethatineed}). This shows that the moment map polytopes obtained from the symplectic cuts on the complement of $Z$ glue consistently with the product-type moment map polytopes obtained from neighborhoods of the connected components of $Z$. The result can be described as a finite union of half-spaces in the following way. First, each half-space that defines the polytope $\Delta_{\mathcal{L}}\subset\mathfrak{t}^*_{Z}$ (the polytope ``at infinity''), is described by a perpendicular vector in $\mathfrak{t}_Z$ and a scalar; this same data defines a half-space in $\mathcal{R}_\mathcal{G}$ of the type whose boundary hyperplane intersects every component of $\mathcal{Z}_\mathcal{G}$. Second, each half-space that defines the limit polytopes in $\mathfrak{t}^*$ also define (using the same data) a half-space in $\mathfrak{t}^*\times\{v\}$ for the $v$ corresponding to the component of $M \backslash Z$ described by this limit polytope. Taken together, these half-spaces define the moment map image in $\mathcal{R}_\mathcal{G}$.


\item By the local form for the action in Proposition~\ref{prop:localnonzero}(\ref{theonethatineed}), no point in $Z$ is fixed by the action. The fixed points are therefore contained in $M\setminus Z$, and again by performing symplectic cutting on $M$ arbitrarily close to each connected component of $Z$ we can construct a collection of compact connected symplectic manifolds $M_{i,\varepsilon}$, one for each edge of the adjacency graph, whose union contains a copy of a neighborhood of each of the fixed points of $M$. By the classic convexity results, the fixed points map to vertices of the moment polytopes of the symplectic manifolds $M_{i,\varepsilon}$. When we take the union of the moment polytopes of the $M_{i,\varepsilon}$ with the $b$-polytopal neighborhoods of $\mu(Z)$ we conclude that the fixed points of $M$ map exactly to the vertices of $\mu(M)$.
\end{enumerate}
\end{proof}

\section{The case of zero modular weights}\label{subsec:allzero}

We now study the Hamiltonian torus actions for which the modular weights of the connected components of $Z$ are \emph{all} zero. This means that for any $X\in\mathfrak{t}$ the $b$-form $\iota_{X^\#}\omega$ is an honest de Rham form and has a primitive $H_X$ which is a smooth function. Because each $X^\#$ is a Hamiltonian vector field on $M$ with respect to the Poisson structure given by inverting the $b$-symplectic form, each $X^{\#}$ is tangent to the symplectic leaves in $Z$. In this case, there is a well-defined map $\mu:M\to\mathfrak{t}^*$ given by $\mu(p)(X)=H_X(p)$, so we extend the definition for the classic symplectic case and call it a moment map for the action.

We begin this section by proving some facts about effective circle actions with zero modular weights (i.e., with a smooth Hamiltonian), following \cite{atiyah}'s argument for the symplectic case, and then conclude with the consequences for effective $\TT^k$-actions with zero modular weights on $b$-symplectic manifolds.

\subsection{Effective circle actions on $b$-symplectic manifolds with smooth Hamiltonians}

\begin{lemma}\label{lemma1}
Let $(M, \omega)$ be a $b$-symplectic manifold endowed with an effective $\SSS^1$-action generated by the $b$-vector field $X^{\#}$, and let $Z$ be the exceptional hypersurface.

If $p\in Z$ is fixed by the action, then $X^\#$ vanishes at $p$ as a section of ${^b}TM$. Moreover, if $H_X$ is a smooth Hamiltonian for $X^\#$, then $dH_X$ vanishes at $p$ as a section of $T^*M$.

\end{lemma}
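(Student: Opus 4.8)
The plan is to reduce both assertions to linear algebra at the fixed point $p$, via the isotropy representation of the $\mathbb{S}^1$-action there. Since $p$ is fixed, $X^\#$ automatically vanishes at $p$ as a section of $TM$, and the content of the first assertion is that the anchor ${^b}TM\to TM$ fails to be an isomorphism over $Z$: choosing local coordinates $(x_1,\dots,x_{2n})$ with $Z=\{x_1=0\}$, so that $x_1\partial_{x_1},\partial_{x_2},\dots,\partial_{x_{2n}}$ is a local frame of ${^b}TM$, all that must be shown is that the coefficient of $x_1\partial_{x_1}$ in $X^\#$ vanishes at $p$. I would let $A:=(dX^\#)_p\in\mathrm{End}(T_pM)$, well defined since $X^\#(p)=0$; being the infinitesimal generator of the isotropy representation of $\mathbb{S}^1$ on $T_pM$, $A$ is semisimple with purely imaginary eigenvalues. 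Since $X^\#$ is a ${^b}$-vector field its flow preserves $Z$, so $T_pZ$ is $A$-invariant, and writing the $\partial_{x_1}$-component of $X^\#$ as $x_1\varphi$ with $\varphi$ smooth, one sees the endomorphism induced by $A$ on $T_pM/T_pZ\cong\mathbb{R}$ is multiplication by $\varphi(p)$. This is a real eigenvalue of $A$, hence $0$, so $\varphi(p)=0$ and $X^\#$ vanishes at $p$ as a section of ${^b}TM$. (This part uses only that $X^\#$ is a ${^b}$-vector field, not that the action is symplectic.)

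For the second assertion I would separate a cheap half from a hard half. Cheap half: since $X^\#$ vanishes at $p$ as a section of ${^b}TM$ and contraction ${^b}TM\otimes\Lambda^2{^b}T^*M\to{^b}T^*M$ is a bundle map, $dH_X=\iota_{X^\#}\omega$ vanishes at $p$ as a section of ${^b}T^*M$; read off in the ${^b}$-coframe $\tfrac{dx_1}{x_1},dx_2,\dots,dx_{2n}$ this says $\partial_{x_j}H_X(p)=0$ for all $j\ge2$, so the only thing left is the transverse derivative $\partial_{x_1}H_X(p)=0$. For this I would first establish the sharper statement that $A$ maps all of $T_pM$ into $W_p:=T_p\mathcal{L}$, the tangent space to the symplectic leaf $\mathcal{L}$ of $Z$ through $p$: a Hamiltonian action is symplectic, hence preserves $\Pi=\omega^{-1}$ and the symplectic foliation of $Z$, so $A$ preserves $W_p$; combined with $A(T_pM)\subseteq T_pZ$ (because $\varphi(p)=0$) and $A(T_pZ)\subseteq W_p$ (the imaginary-eigenvalue argument again, applied to the line $T_pZ/W_p$), the induced operator $\bar A$ on the $2$-dimensional quotient $T_pM/W_p$ maps $T_pM/W_p$ into the line $T_pZ/W_p$ and kills that line, so $\bar A^2=0$; but $\bar A$, being induced by the semisimple $A$ on an invariant quotient, is itself semisimple, so $\bar A=0$, i.e. $A(T_pM)\subseteq W_p$. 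Finally I would pass to ${^b}$-Darboux coordinates (\cite{guimipi}), $\omega=\tfrac{dx_1}{x_1}\wedge dx_2+\sum_{i\ge1}dx_{2i+1}\wedge dx_{2i+2}$ with $Z=\{x_1=0\}$: writing $X^\#$ in the frame $x_1\partial_{x_1},\partial_{x_2},\dots,\partial_{x_{2n}}$ (all coefficients now vanishing at $p$) and using $\omega_p(x_1\partial_{x_1},x_1\partial_{x_1})=0$, a short computation rewrites $\partial_{x_1}H_X=\tfrac{1}{x_1}\,\omega(X^\#,x_1\partial_{x_1})$ so that at $p$ it equals $\omega_p(A\partial_{x_1},x_1\partial_{x_1})$; since $A\partial_{x_1}\in W_p$ and $W_p=\langle\partial_{x_3},\dots,\partial_{x_{2n}}\rangle$ is $\omega_p$-orthogonal to $x_1\partial_{x_1}$ in these coordinates, this is $0$, and hence $dH_X$ vanishes at $p$ as a section of $T^*M$.

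I expect the main obstacle to be exactly this last step. The naive argument — ``$\iota_{X^\#}\omega$ with $X^\#$ vanishing, hence $dH_X$ vanishes'' — only controls $dH_X$ as a ${^b}$-form and is blind to the direction transverse to $Z$; recovering that component forces one to use the symplectic (Poisson) geometry of the leaves of $Z$ together with the fact that $A$ is \emph{semisimple} rather than merely having purely imaginary eigenvalues — without semisimplicity the induced operator on $T_pM/W_p$ could be a nonzero nilpotent. The other ingredients (the shape of $A$ imposed by $X^\#$ being a ${^b}$-vector field, and the coordinate verification that $W_p\perp_{\omega_p}x_1\partial_{x_1}$) are routine.
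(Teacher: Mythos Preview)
Your argument is correct. For the first assertion the paper gives a direct dynamical argument rather than linearizing: writing $X^\#=a_1x_1\partial_{x_1}+\sum_{i\ge2}a_i\partial_{x_i}$, if $a_1(p)>0$ then on a small neighborhood the $x_1$-coordinate is strictly monotone along trajectories starting with $x_1>0$, contradicting periodicity of the $\SSS^1$-flow. Your eigenvalue argument (the induced operator on the real line $T_pM/T_pZ$ must be multiplication by a real, hence zero, eigenvalue of $A$) is the infinitesimal version of this same observation. For the second assertion both proofs first note that vanishing of $X^\#$ in ${^b}TM$ gives $dH_X|_{T_pZ}=0$, so only the transverse derivative $\partial_{x_1}H_X(p)$ is at issue. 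At this point the paper simply invokes \cite[Lemma~17]{btoric}, whereas you supply a self-contained argument: using that $A$ preserves $T_pZ$ and $T_p\mathcal{L}$ and is semisimple, you show the induced operator on the $2$-dimensional quotient $T_pM/T_p\mathcal{L}$ is nilpotent and hence zero, so $A(T_pM)\subseteq T_p\mathcal{L}$, and then compute in $b$-Darboux coordinates that $\partial_{x_1}H_X(p)=\omega_p(A\partial_{x_1},x_1\partial_{x_1})=0$ because $T_p\mathcal{L}$ is $\omega_p$-orthogonal to $x_1\partial_{x_1}$. Your route is longer but independent of \cite{btoric}, and it makes explicit where compactness of $\SSS^1$ (semisimplicity of $A$, not merely imaginary spectrum) enters --- consistent with the Remark following the Lemma, which shows the conclusion fails for $\RR$-actions.
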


\begin{remark}
By Lemma \ref{lemma1}, the fixed points of a Hamiltonian $\SSS^1$-action with smooth Hamiltonian $H_X$ are exactly the points where $dH_X$ vanishes as a smooth form, as in classic symplectic geometry. However, unlike in classic symplectic geometry, the same does not hold for $\RR$-actions. For example, consider the $b$-symplectic manifold $(\mathbb{R}^2, \frac{1}{x}dx\wedge dy)$ endowed with the $\RR$-action generated by the $b$-vector field $X^\#=x\frac{\partial}{\partial y}$, which fixes all points of the exceptional hypersurface $Z=\{0\}\times\RR$. The Hamiltonian is the smooth function $H_X(x,y)=x$, whose differential $dx$ never vanishes, and in particular doesn't vanish at the fixed locus $Z$.
\end{remark}

\begin{proof}

In order to prove the first assertion, take local coordinates $\bar{x}=(x_1, \dots, x_n)$ around $p$ such that $Z=\{\bar{x}:x_1 = 0\}$. Then the $b$-vector field can be written as
\[
X^\# = a_1x_1\frac{\partial}{\partial x_1} + \sum_{i = 2}^n a_i\frac{\partial}{\partial x_i}
\]
for smooth functions $a_i=a_i(\bar{x})$, with $a_i(\bar{0}) = 0$ for $i \geq 2$. Assume towards a contradiction that $a_1(\bar{0}) \neq 0$, and without loss of generality that $a_1(\bar{0}) > 0$. Let $U = \{\bar{x}:a_1(\bar{x}) > 0\}$, and $U'$ be a smaller open set containing $\bar 0$ such that the time-1 flow (assume that $\SSS^1$ is parametrized from $0$ to $1$) of any point in $U'$ remains in $U$. Then for any point in $U' \cap \{\bar x:x_1 > 0\}$, the time-1 flow of the trajectory has monotonically increasing $x_1$-coordinate, and therefore the trajectory cannot be periodic, which yields a contradiction with it coming from an $\SSS^1$-action.

Now consider the case that there exists a smooth Hamiltonian $H_X$ for the action. Then by the argument above $X^\#$ vanishes at $p$ as a $b$-vector field, so $dH_X$ vanishes at $p$ as a $b$-form, i.e., $dH_X(v) = 0$ for all $v \in T_pZ$. This implies that the kernel of $dH_X\in\Omega^1(M)$ (as a smooth form) at $p$ contains the codimension-1 subspace $T_pZ$. By \cite[Lemma 17]{btoric}, it follows that $dH_X$ vanishes at $p$ (as a smooth form).
\end{proof}

\begin{lemma} \label{lemma2}
Let $(M,\omega)$ be a $b$-symplectic manifold endowed with an effective $\SSS^1$-action generated by the $b$-vector field $X^{\#}$ and with a smooth Hamiltonian $H_X$. Let $p$ be a fixed point on the exceptional hypersurface $Z$, and $\mathcal{L}$ be the symplectic leaf containing $p$.

Then there exist vectors $v_Z \in T_pZ \backslash T_p\mathcal{L}$ and $v_M \in T_pM \backslash T_pZ$ that are fixed by the isotropy representation on $T_pM$.
\end{lemma}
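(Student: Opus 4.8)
The plan is to analyze the isotropy representation of $\mathbb{S}^1$ on $T_pM$ and on the $b$-tangent space ${^b}T_pM$, using the compatibility of the action with the $b$-symplectic structure together with the structural facts already established in Lemma~\ref{lemma1}. First I would recall that by Lemma~\ref{lemma1} the fundamental vector field $X^\#$ vanishes at $p$ as a section of ${^b}TM$, so the $\mathbb{S}^1$-action genuinely fixes $p$ and linearizes near $p$; hence there is a well-defined isotropy representation on the $2n$-dimensional vector space ${^b}T_pM$, preserving the (now genuinely nondegenerate) linear $b$-symplectic form $\omega_p$ on ${^b}T_pM$. Since $Z$ is invariant and $p\in Z$, the subspace ${^b}T_pZ$ (a hyperplane in ${^b}T_pM$, corresponding under the anchor map to $T_pZ$) is invariant, as is its $\omega_p$-orthogonal complement, which is a line. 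The symplectic leaf $\mathcal{L}$ through $p$ is also invariant, so $T_p\mathcal{L}$ is an invariant symplectic subspace of the ordinary tangent space, and of ${^b}T_pZ$.

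Next I would identify the relevant invariant complements. Because $\mathbb{S}^1$ is compact, every invariant subspace has an invariant complement, and the representation decomposes into a trivial summand plus a sum of nontrivial weight-$m$ rotation planes. The key linear-algebra point is a counting/parity argument analogous to the classical symplectic case: inside ${^b}T_pM$ the leaf $T_p\mathcal{L}$ is symplectic of codimension $2$, and its $\omega_p$-orthogonal complement $N$ is a $2$-dimensional invariant symplectic subspace of ${^b}T_pM$. I would then argue that the induced action of $\mathbb{S}^1$ on $N$ must be \emph{trivial}: indeed, $N$ meets ${^b}T_pZ$ in a line $\ell_Z$ (since ${^b}T_pZ$ is a hyperplane and $N\not\subset {^b}T_p\mathcal{L}$), the modular (Poisson) vector field direction, and this line — being canonically attached to $Z$ and the leaf — is fixed; a rotation on the $2$-plane $N$ fixing a line must be trivial. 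Translating $\ell_Z\subset N\subset {^b}T_pM$ back through the anchor ${^b}T_pM\to T_pM$ (which is an isomorphism away from $Z$ but has the log-direction in its image at $Z$), I would produce the vector $v_Z\in T_pZ\setminus T_p\mathcal{L}$ that is isotropy-fixed, and a vector $v_M\in T_pM$ not in $T_pZ$ — coming from the log-normal direction $y\partial_y$, which the anchor sends to $0$ but whose "smooth model" normal direction in $T_pM$ is isotropy-fixed because the local defining function $y$ for $Z$ can be chosen $\mathbb{S}^1$-invariant (averaging over the circle) and the smooth normal bundle of $Z$ then carries the trivial representation, by the argument of Lemma~\ref{lemma1} that $X^\#$ has no $x_1\partial_{x_1}$ component.

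More concretely, for $v_M$ I would make the averaging explicit: choose a tubular-neighborhood coordinate $x_1$ with $Z=\{x_1=0\}$ and average it over $\mathbb{S}^1$ to get an invariant defining function; the corresponding $\partial/\partial x_1$ at $p$ spans an $\mathbb{S}^1$-invariant line in $T_pM$ transverse to $T_pZ$, and since the action fixes $Z$ pointwise near the relevant directions (this is exactly the vanishing of the $a_1 x_1\partial_{x_1}$-type term at $\bar 0$ extracted in the proof of Lemma~\ref{lemma1}, now upgraded to say the linear isotropy acts trivially on this normal line), we get $v_M$ fixed with $v_M\notin T_pZ$. For $v_Z$: restrict attention to the invariant subspace ${^b}T_pZ\cong T_pZ$; here $\mathbb{S}^1$ acts preserving the (possibly degenerate) restriction of $\omega_p$, with kernel spanned by the modular direction; that kernel is $1$-dimensional and invariant, giving $v_Z\in T_pZ$, and it is transverse to $T_p\mathcal{L}$ since $T_p\mathcal{L}$ is precisely a symplectic complement to it in ${^b}T_pZ$.

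The main obstacle I anticipate is bookkeeping the passage between ${^b}T_pM$ and $T_pM$ cleanly: the anchor map is not an isomorphism at $p$, so "invariant line in ${^b}T_pM$" and "invariant line in $T_pM$" are not automatically interchangeable, and one must be careful that the log-direction (kernel of the anchor at $Z$) is handled correctly — this is where the explicit invariant defining function $y$ for $Z$ does the real work, pinning down which smooth normal direction in $T_pM$ is fixed. The rest is standard compact-group representation theory (existence of invariant complements) combined with the symplectic linear algebra already used in the proof of \cite[Lemma 17]{btoric}.
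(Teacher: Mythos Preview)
Your proposal is correct in its essentials, but it takes a far more circuitous route than the paper. The paper's proof is four lines: pick an $\SSS^1$-invariant Riemannian metric on $M$; set $v_M$ to be any nonzero vector in $T_pZ^{\perp}$ and $v_Z$ any nonzero vector in $T_p\mathcal{L}^{\perp}\cap T_pZ$; since $T_pZ$ and $T_p\mathcal{L}$ are $d\phi_t$-invariant and the metric is invariant, the spans of $v_M$ and $v_Z$ are one-dimensional invariant real subspaces; and the only representation of $\SSS^1$ on a real line is the trivial one. No $b$-tangent bundle, no anchor map, no symplectic orthogonals, no averaging of defining functions.

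Your approach instead manufactures the two invariant lines out of the $b$-symplectic/Poisson structure: for $v_Z$ you take the kernel of the presymplectic form on $T_pZ$ (the modular direction), and for $v_M$ you average a defining function and try to read off the associated normal direction. This works, and has the mild aesthetic advantage that the invariant line for $v_Z$ is canonical rather than metric-dependent; but it forces you into exactly the anchor-map bookkeeping you flag as the main obstacle. Two spots in your write-up would need tightening before it compiles into a proof: (i) the symbol ``${^b}T_pZ$'' is not standard---at $p\in Z$ the anchor ${^b}T_pM\to T_pM$ has image $T_pZ$ and one-dimensional kernel, and you should say which subspace of ${^b}T_pM$ you mean; (ii) an invariant defining function $y$ does not by itself determine a tangent vector ``$\partial/\partial y$'' at $p$---you still need an invariant complement to $T_pZ$ in $T_pM$, which is precisely what the invariant-metric trick supplies in one stroke. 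Both approaches ultimately rest on the same elementary fact (a real one-dimensional representation of the connected group $\SSS^1$ is trivial); the paper just gets there with much less machinery.
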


\begin{proof} Let $\phi_t$ be the time-$t$ flow of the $\SSS^1$ action. Choose an $\SSS^1$-invariant metric on $M$, and let $v_M$ be any nonzero element of $T_pZ^{\perp}$ and $v_Z$ be any element of $T_p\mathcal{L}^\perp \cap T_pZ$. By the invariance of the metric, and because $T_p\mathcal{L}$ and $T_pZ$ are $d\phi_t$-invariant subspaces, the span of $v_Z$ in $T_pM$ and the span of $v_M$ in $T_pM$ are both invariant subspaces. Because the only representation of $\SSS^1$ on a one-dimensional real subspace is the identity representation, it follows that $v_Z$ and $v_M$ are both fixed by the isotropy representation $d\phi_t$.
\end{proof}

\begin{proposition}\label{prop:even}
Let $(M,\omega)$ be a $b$-symplectic manifold endowed with an effective $\SSS^1$-action generated by the $b$-vector field $X^{\#}$ and with a smooth Hamiltonian $H_X$.
Then $H_X$ is Morse-Bott and all its indices and coindices are even.

More precisely, for points $p \in Z$ where $dH_X(p) = 0$, the indices and coindices of $\textrm{Hess}_p(H_X)$ are the same as those of $\textrm{Hess}_p(\restr{H_X}{\mathcal{L}})$, where $\mathcal{L}$ is the symplectic leaf through $p$.
 \end{proposition}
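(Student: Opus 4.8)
The plan is to reduce the statement to the classic fact that a Hamiltonian generating a circle action on a symplectic manifold is Morse--Bott with even indices, applied separately away from $Z$ and along the symplectic leaves inside $Z$. Away from $Z$ there is nothing new: on the open dense set $M\setminus Z$ the form $\omega$ is an honest symplectic form, the $\SSS^1$-action is Hamiltonian in the usual sense with Hamiltonian $H_X$, and the classical argument (diagonalize the isotropy representation on the tangent space at a fixed point, observe that each weight contributes a $2$-dimensional block to the Hessian) shows $H_X$ is Morse--Bott there with even indices and coindices. So the work is entirely at a fixed point $p\in Z$.

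First I would set up coordinates: pick local coordinates near $p$ in which $Z=\{x_1=0\}$ and use the splitting of $T_pM$ furnished by Lemma~\ref{lemma2}, namely $T_pM = \langle v_M\rangle \oplus T_pZ$ and $T_pZ = \langle v_Z\rangle \oplus T_p\mathcal{L}$, all three summands being invariant under the isotropy representation $d\phi_t$, with $v_M$ and $v_Z$ fixed vectors. By Lemma~\ref{lemma1}, $dH_X(p)=0$ as a smooth form, so $\mathrm{Hess}_p(H_X)$ is a well-defined symmetric bilinear form on $T_pM$. The next step is to show this Hessian is block-diagonal with respect to the above splitting and that the blocks along $\langle v_M\rangle$ and $\langle v_Z\rangle$ vanish. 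The vanishing along the fixed directions is the standard averaging argument: since $H_X$ is $\SSS^1$-invariant and $v_M$, $v_Z$ are fixed, $t\mapsto \mathrm{Hess}_p(H_X)(v_M, w)$ and similarly for $v_Z$ are constant along the isotropy orbit of $w$, and averaging over the group forces the mixed terms with a fixed vector to vanish unless the other vector is also fixed; a short computation (or invoking that the Hessian is $d\phi_t$-invariant and the fixed subspace of the representation on $T_pM$ splits off orthogonally) shows the $\langle v_M\rangle$ and $\langle v_Z\rangle$ directions are in the kernel of the Hessian. Hence $\mathrm{Hess}_p(H_X)$ is supported on $T_p\mathcal{L}$, i.e.\ equals $\mathrm{Hess}_p(\restr{H_X}{\mathcal{L}})$ as a form on $T_p\mathcal{L}$ (extended by zero), which immediately gives the "more precisely" clause and shows the index and coindex of $H_X$ at $p$ equal those of $\restr{H_X}{\mathcal{L}}$ at $p$.

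Finally I would invoke that $\mathcal{L}$ is a genuine symplectic leaf of dimension $2n-2$, that $X^\#$ restricted to $\mathcal{L}$ generates a Hamiltonian $\SSS^1$-action on $\mathcal{L}$ with Hamiltonian $\restr{H_X}{\mathcal{L}}$ (this uses that $X^\#$ is tangent to the leaves, as noted in the text, and that $\iota_{X^\#}\omega = dH_X$ restricts to the Hamilton equation on $\mathcal{L}$), and that $\restr{H_X}{\mathcal{L}}$ is therefore Morse--Bott with even index and coindex at $p$ by the classical symplectic result applied to $\mathcal{L}$. Since the critical set of $H_X$ is the fixed-point set of the action, and near points of $Z$ the critical set coincides with the union over affected leaves of the critical sets of $\restr{H_X}{\mathcal{L}}$ (which are submanifolds by the leafwise Morse--Bott property, and their union assembles into a submanifold of $M$ because $Z$ is a mapping torus / fibered by symplectomorphic leaves), $H_X$ is Morse--Bott on all of $M$ with even indices and coindices. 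The main obstacle I expect is the bookkeeping in the middle step — rigorously justifying that the Hessian is block-diagonal and kills the two distinguished directions, and that the leafwise critical sets fit together into a smooth nondegenerate critical submanifold of $M$ of the correct (even) codimension — rather than any conceptual difficulty; the classical input and Lemmas~\ref{lemma1}--\ref{lemma2} do the real work.
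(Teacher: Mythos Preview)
Your overall strategy matches the paper's: reduce to the classical symplectic result on $M\setminus Z$ and on the leaves $\mathcal{L}$, and at a fixed point $p\in Z$ show that $\mathrm{Hess}_p(H_X)$ is supported on $T_p\mathcal{L}$. The gap is in the middle step. Your averaging argument correctly shows that the Hessian, being $d\phi_t$-invariant, is block-diagonal with respect to the decomposition of $T_pM$ into isotypic pieces, so mixed terms $\mathrm{Hess}_p(H_X)(v_M,w)$ vanish when $w$ lies in a nonzero weight space. But invariance and an orthogonal splitting say nothing about the restriction of the Hessian to the fixed subspace itself: they do not force $\mathrm{Hess}_p(H_X)(v_M,v_M)$, $\mathrm{Hess}_p(H_X)(v_M,v_Z)$, or $\mathrm{Hess}_p(H_X)(v_Z,v_Z)$ to vanish. (Already in the purely symplectic case the Hessian restricts to a well-defined invariant form on the fixed subspace of $T_pM$; its vanishing there comes from that subspace being tangent to the critical manifold, not from invariance.) So the ``short computation'' you defer is exactly the missing content.

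The paper closes this gap by a different route. It first shows the critical set $Y=\{dH_X=0\}$ is a submanifold near $p$ using an invariant metric: the exponential map identifies a neighborhood of $p$ in the fixed-point set with the fixed subspace of $T_pM$, so in particular $v_M,v_Z\in T_pY$. It then shows $T_pY\subseteq\ker\mathrm{Hess}_p(H_X)$ by continuity from nearby points $q\in Y\setminus Z$, where the classical result already gives $T_qY\subseteq\ker\mathrm{Hess}_q(H_X)$; the existence of $v_M$ from Lemma~\ref{lemma2} guarantees $Y\not\subseteq Z$, so such $q$ exist arbitrarily close to $p$. This also explains why your description of the critical set near $Z$ as ``the union over leaves of the leafwise critical sets'' is incomplete: that union is only $Y\cap Z$, whereas $Y$ is transverse to $Z$ in the $v_M$ direction and has dimension two larger than $\mathrm{Crit}(\restr{H_X}{\mathcal{L}})$. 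The exponential-map argument gives the submanifold structure of $Y$ (and hence the nondegeneracy and the vanishing of the Hessian along $v_M,v_Z$) in one stroke, without assembling it from leafwise pieces or invoking the mapping-torus structure.
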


 \begin{proof} We need to show that the set $Y := \{p \in M \mid dH_X(p) = 0\}$ is a smooth manifold, that for each such $p \in Y$, the tangent space $T_pY$ coincides with the kernel of $\textrm{Hess}_p(H_X)$, and that all indices and coindices are even. Away from $Z$ we are in a symplectic manifold and therefore these facts hold \cite[Lemma 2.2]{atiyah}, so we are left with proving them for points $p \in Z \cap Y$.

By Lemma \ref{lemma1}, the fixed points of the action on $Z$ are exactly the points of $Z\cap Y$. Let $p \in Z \cap Y$, and consider an invariant metric in a neighborhood of $p$. Then the exponential map (with respect to this metric) gives a local diffeomorphism between the fixed subspace of the tangent space and the set of fixed points of the action, so $Y$ is indeed (locally, and therefore globally) a submanifold.

Let $v \in T_pY \subseteq T_pM$, and pick an extension $\tilde{v}$ of $v$ to a neighborhood of $p$ such that $\tilde{v}$ is tangent to $Y$. Because the tangent space of $Y$ coincides with the kernel of the hessian of $H_X$ for points of $Y$ away from $Z$, i.e.,
 $$\tilde{v}_q \in \textrm{ker}\left(\textrm{Hess}_q(H_X)\right) \text{ for all }q \in Y \backslash (Y \cap Z),$$
by continuity and because $Y$ is not a subset of $Z$ (in Lemma \ref{lemma2}, the existence of $v_M$ proves this) it follows that $v \in \textrm{ker}(\textrm{Hess}_p(f))$.

If we choose a basis for $T_pM$ so that the first $n-2$ vectors are a basis for $T_p\mathcal{L}$, and the last two vectors are $v_Z$ and $v_M$ (from Lemma \ref{lemma2}), the argument in the paragraph above implies that the Hessian matrix with respect to this basis will be
\[
\left[ \begin{array}{c c c} \textrm{Hess}_p(\restr{H_X}{\mathcal{L}}) & 0 & 0 \\ 0 & 0 & 0 \\ 0 & 0 & 0 \end{array}\right]
\]
which proves that the index and coindex of $\textrm{Hess}_p(H_X)$ equals the index and coindex of $\textrm{Hess}_p(\restr{H_X}{\mathcal{L}})$, each of which must be even by the classic argument for Hamiltonian $\SSS^1$-actions on compact symplectic manifolds (considering $\restr{H_X}{\mathcal{L}}$ as Hamiltonian on $(\mathcal{L}, \omega_{\mathcal{L}})$, where $\omega_{\mathcal{L}}$ is the symplectic form on $\mathcal{L}$ induced by the Poisson structure $\omega^{-1}$  \cite[Lemma 2.2]{atiyah}.
 \end{proof}

\begin{remark}\label{rmk:not one}
\cite[Lemma 5.51]{mcduff}
The level sets of a Morse-Bott function with no indices or coindices equal to 1 are connected.
\end{remark}

\begin{proposition}\label{prop:the thing}
Let $(M,\omega)$ be a connected $b$-symplectic manifold endowed with an effective $\SSS^1$-action generated by the $b$-vector field $X^{\#}$ and with a smooth Hamiltonian $H_X$. Let $\mathcal{L}$ be a symplectic leaf of one component of the exceptional hypersurface $Z$. Then,
$$H_X(\mathcal{L}) = H_X(M).$$
\end{proposition}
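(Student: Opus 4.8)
The plan is to show the two interval inclusions $H_X(\mathcal{L}) \subseteq H_X(M)$ and $H_X(M) \subseteq H_X(\mathcal{L})$. The first is trivial since $\mathcal{L} \subseteq M$. The entire content is the reverse inclusion, which says that the image of $H_X$ restricted to the single symplectic leaf $\mathcal{L}$ already captures the full range of $H_X$ on all of $M$. Since $H_X(M)$ and $H_X(\mathcal{L})$ are both compact intervals (images of connected compact sets under a continuous map; $\mathcal{L}$ is assumed compact and $M$ is connected and, being a compact symplectic leaf's ambient $b$-symplectic manifold under a circle action with fixed points, the relevant sublevel/superlevel structure is governed by the Morse--Bott theory of Proposition~\ref{prop:even}), it suffices to prove that $\max_M H_X = \max_{\mathcal{L}} H_X$ and $\min_M H_X = \min_{\mathcal{L}} H_X$.

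The key step is to locate the global extrema of $H_X$ on $M$ inside $\mathcal{L}$. First I would invoke Proposition~\ref{prop:even}: $H_X$ is a Morse--Bott function whose critical set is exactly the fixed-point set $Y$, and all indices and coindices are even. In particular there are no critical submanifolds of index or coindex equal to $1$, so by Remark~\ref{rmk:not one} every level set of $H_X$ is connected. The global maximum of $H_X$ is attained on a connected critical submanifold $C_{\max}$ of coindex $0$, and the global minimum on a connected critical submanifold $C_{\min}$ of index $0$. The strategy is to argue that these extremal critical submanifolds must meet $Z$, and in fact lie inside the chosen leaf $\mathcal{L}$, forcing $\max_M H_X$ and $\min_M H_X$ to be values taken by $\restr{H_X}{\mathcal{L}}$.

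The crucial mechanism is the second, more precise assertion of Proposition~\ref{prop:even}: at a fixed point $p \in Z \cap Y$, the Hessian of $H_X$ has exactly the same index and coindex as the Hessian of $\restr{H_X}{\mathcal{L}}$, and the normal directions $v_Z, v_M$ of Lemma~\ref{lemma2} lie in the kernel. This means that, transverse to $Z$ and transverse to the leaf, $H_X$ is critically degenerate: $H_X$ does not increase or decrease in the $v_M$ direction to second order at such points. The plan is to exploit connectedness of level sets together with this degeneracy to show that no extremum can occur strictly away from $\mathcal{L}$. Concretely, I would consider the symplectic leaf $\mathcal{L}$ with its induced form $\omega_{\mathcal{L}}$, on which $\restr{H_X}{\mathcal{L}}$ is a Morse--Bott Hamiltonian for the induced $\SSS^1$-action with connected extremal sets, and show that the extreme values of $\restr{H_X}{\mathcal{L}}$ coincide with the global extreme values on $M$. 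The comparison of Hessians guarantees that a point realizing $\max_{\mathcal{L}} H_X$ (respectively $\min_{\mathcal{L}} H_X$) is a genuine local maximum (minimum) of $H_X$ on $M$, because the extra normal directions $v_Z, v_M$ contribute nothing to the Hessian and the flow is governed entirely by the leaf directions; combined with connectedness of the level sets of $H_X$ and the fact that $M$ is connected, this pins the global extrema to values already attained on $\mathcal{L}$.

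\textbf{Main obstacle.} The hard part will be ruling out that $H_X$ attains a strictly larger maximum (or smaller minimum) at a critical submanifold contained entirely in $M \setminus Z$. A component of $M \setminus Z$ is an open symplectic manifold but is not compact, so the classical argument that extrema sit on the boundary does not directly apply; the mild logarithmic/degenerate behaviour near $Z$ must be used to show that $H_X$ cannot "escape" to larger values in the open symplectic pieces. I expect to handle this by a connectedness-of-level-sets argument: if the maximum value $m = \max_M H_X$ were attained only in $M \setminus Z$, then for values slightly below $m$ the level set would be confined to $M \setminus Z$ and could not connect to the part of $M$ meeting $\mathcal{L}$, contradicting Remark~\ref{rmk:not one} unless $\mathcal{L}$ itself reaches up to the value $m$. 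Making this contradiction precise — ensuring the level sets genuinely separate and invoking connectedness correctly across the hypersurface $Z$ — is the delicate point of the proof.
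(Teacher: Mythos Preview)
Your proposal is correct and follows essentially the same route as the paper: show that a point $p \in \mathcal{L}$ realizing $b = \max_{\mathcal{L}} H_X$ is a critical point of $H_X$ on $M$ with coindex zero (via the Hessian comparison of Proposition~\ref{prop:even}), hence a local maximum on $M$; then invoke connectedness of $H_X^{-1}(b)$ from Remark~\ref{rmk:not one} to conclude it is the global maximum.

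Your ``main obstacle'' paragraph is unnecessary anxiety. You do not need to separately rule out a larger maximum on a critical submanifold sitting in $M\setminus Z$; the argument you already sketched handles it. Once the critical submanifold $C$ through $p$ has coindex zero, it is open (and closed) in the connected level set $H_X^{-1}(b)$, so $H_X^{-1}(b)=C$. If some $q$ had $H_X(q)>b$, a path from $q$ to $p$ would have to cross $H_X^{-1}(b)=C$, but points near $C$ satisfy $H_X\le b$, a contradiction. So the clean framing is approach~2 in your plan (promote the leaf extremum to a global one), not approach~1 (locate $C_{\max}$ and drag it onto $\mathcal{L}$); the paper does exactly this.
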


\begin{proof}
Clearly $H_X(\mathcal{L}) \subseteq H_X(M)$, and because we are assuming that the symplectic leaves are compact, $H_X(\mathcal{L})$ is an interval in $\RR$, say $[a, b]$. By Proposition~\ref{prop:even} and Remark~\ref{rmk:not one}, the level set $H_X^{-1}(b)$ is connected and contains a point $p \in \mathcal{L}$ which is a fixed point of the $\SSS^1$-action. By Lemma~\ref{lemma1}, this is a critical point of $H_X$, so $H_X^{-1}(b)$ must contain this critical submanifold of the Morse-Bott function.

The coindex of $\textrm{Hess}_p(H_X)$ equals the coindex of the $\textrm{Hess}_p(\restr{H_X}{\mathcal{L}})$, which is zero because $b$ is a global maximum of the restriction of $H_X$ to $\mathcal{L}$. Then, the component of $H_X^{-1}(b)$ containing $p$ is a local maximum, which must be a global maximum since $H_X^{-1}(b)$ is connected.
\end{proof}

\subsection{Effective $\TT^k$-actions with zero modular weights}

A moment map for an effective $\TT^k$-action with zero modular weights is a collection of the smooth Hamiltonians for the actions of its coordinate circles. Then, as a direct consequence of Proposition~\ref{prop:the thing} we have:

\begin{corollary}\label{prop:mu(M)=mu(Z)}
Let $(M,\omega)$ be a $b$-symplectic manifold endowed with an effective Hamiltonian $\mathbb{T}^k$-action such that that the modular weights for the connected components of the exceptional hypersurface are \underline{all zero}.

Then there exists a moment map $\mu:M\to\mathfrak{t}^*$, its image is the convex hull of the image of the fixed point set, and more precisely
$$\mu(M)=\mu_{|Z}(Z)=\mu_{|\mathcal{L}}(\mathcal{L}),$$
where $\mathcal{L}$ is a symplectic leaf of any of the connected components of the exceptional hypersurface $Z$.
\end{corollary}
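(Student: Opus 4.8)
The plan is to reduce the $\TT^k$ statement to the circle case already established in Proposition~\ref{prop:the thing}. First I would produce the moment map: since every modular weight is zero, for each $X \in \mathfrak{t}$ the one-form $\iota_{X^\#}\omega$ is an honest de Rham form with smooth primitive $H_X$, and the assignment $X \mapsto H_X$ (fixing the constants, e.g. by the connectedness of $M$) is linear, so $\mu : M \to \mathfrak{t}^*$ defined by $\langle\mu(p),X\rangle = H_X(p)$ is a well-defined smooth moment map.

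Next I would prove the chain of equalities $\mu(M) = \mu|_Z(Z) = \mu|_{\mathcal{L}}(\mathcal{L})$. The inclusions $\mu|_{\mathcal{L}}(\mathcal{L}) \subseteq \mu|_Z(Z) \subseteq \mu(M)$ are obvious, so it suffices to show $\mu(M) \subseteq \mu|_{\mathcal{L}}(\mathcal{L})$ for a fixed symplectic leaf $\mathcal{L}$ in one component of $Z$. Fix $q \in M$ and $X \in \mathfrak{t}$; I want to exhibit a point of $\mathcal{L}$ on which $H_X$ agrees with $H_X(q)$. If the circle generated by $X$ were effective this would be immediate from Proposition~\ref{prop:the thing}, which gives $H_X(\mathcal{L}) = H_X(M)$. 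The subtlety is that a given $X$ need not generate an effective (or even closed) subgroup. To handle this I would use a density/limiting argument: choose a sequence of rational directions $X_n \in \mathfrak{t}$ converging to $X$, each generating a circle subgroup (after rescaling), which one may take to be effective on $M$ by passing to the quotient by the (finite or lower-dimensional) kernel — note $\restr{H_X}{\mathcal{L}}$ is a Hamiltonian for the induced action on $(\mathcal{L},\omega_\mathcal{L})$ exactly as in Proposition~\ref{prop:even}. For each $n$, Proposition~\ref{prop:the thing} gives $H_{X_n}(M) = H_{X_n}(\mathcal{L})$; passing to the limit and using compactness of $\mathcal{L}$ and continuity of $(p,X) \mapsto H_X(p)$ yields $H_X(M) \subseteq \overline{H_X(\mathcal{L})} = H_X(\mathcal{L})$, the last equality by compactness of $\mathcal{L}$. (Alternatively, since $\mu|_{\mathcal{L}}$ is the moment map of a genuine Hamiltonian torus action on the compact symplectic manifold $(\mathcal{L},\omega_{\mathcal{L}})$, its image is a convex polytope, and it is enough to check that $\mu(M)$ is contained in each supporting half-space of $\mu|_{\mathcal{L}}(\mathcal{L})$; each such half-space is cut out by a single linear functional $X \in \mathfrak{t}$, reducing again to the one-parameter statement $H_X(M) \subseteq H_X(\mathcal{L})$.)

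Finally, the convexity of $\mu(M)$ and the identification with the convex hull of the image of the fixed-point set follow for free: $\mu(M) = \mu|_{\mathcal{L}}(\mathcal{L})$ is the moment image of the compact symplectic manifold $(\mathcal{L},\omega_{\mathcal{L}})$ under the induced Hamiltonian $\TT^k$-action, so by the classical Atiyah--Guillemin--Sternberg theorem it is a convex polytope equal to the convex hull of the images of the $\TT^k$-fixed points in $\mathcal{L}$; and by Lemma~\ref{lemma1} these are fixed points of the action on $M$, while conversely the images of all fixed points of $M$ lie in $\mu(M) = \mu|_{\mathcal{L}}(\mathcal{L})$, so the two convex hulls coincide.

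The main obstacle I anticipate is the passage from the circle case to an arbitrary $X \in \mathfrak{t}$: one must be careful that the approximating directions $X_n$ genuinely generate effective circle actions (or reduce to such) so that Proposition~\ref{prop:the thing} applies, and that the restricted functions $\restr{H_{X_n}}{\mathcal{L}}$ remain smooth Hamiltonians for the induced leaf action — which is where the zero-modular-weight hypothesis, forcing $X^\#$ tangent to the leaves in $Z$, is doing the essential work. Everything else is a routine combination of compactness, continuity, and the cited classical convexity theorem.
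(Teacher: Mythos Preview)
Your approach is exactly the paper's: reduce to Proposition~\ref{prop:the thing} for circle subgroups and then invoke the classical Atiyah--Guillemin--Sternberg theorem on the compact leaf $(\mathcal{L},\omega_{\mathcal{L}})$. If anything, you are more careful than the paper, which simply declares the corollary ``a direct consequence of Proposition~\ref{prop:the thing}'' applied to the coordinate circles.

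One small point of presentation: in your main argument you fix $q\in M$ and $X\in\mathfrak{t}$ and produce $\ell\in\mathcal{L}$ with $H_X(\ell)=H_X(q)$, but this $\ell$ depends on $X$, so it does not by itself give $\mu(q)\in\mu(\mathcal{L})$. The missing bridge is precisely the convexity of $\mu(\mathcal{L})$ (so that $\mu(\mathcal{L})=\bigcap_X\{y:\langle y,X\rangle\le\max_{\mathcal{L}}H_X\}$), which you do invoke in your parenthetical alternative; that supporting-half-space version is the clean argument and makes the density step for irrational $X$ unnecessary, since the facet normals of the rational polytope $\mu(\mathcal{L})$ already lie in the lattice and Proposition~\ref{prop:the thing} applies to them directly.
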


\section{Why zero and nonzero modular weights cannot coexist}\label{sec:noncoexistence}

In the previous sections we have obtained convexity results for two extreme cases: when the modular weights of the connected components of $Z$ are either all zero or all nonzero. In this section we show that only these extreme cases occur.

\begin{theorem}
Let $(M,\omega)$ be a connected $b$-symplectic manifold endowed with an effective Hamiltonian $\mathbb{T}^k$-action.
Then either the modular weights for the connected components of the exceptional hypersurface $Z$ are all zero, or they are all nonzero.
\end{theorem}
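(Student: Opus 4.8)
The plan is to argue by connectedness of the adjacency graph $G$ of $(M,Z)$, reducing everything to a statement about two components of $Z$ meeting at a common vertex. Since $M$ is connected, $G$ is connected, so it suffices to show: if $Z_1$ and $Z_2$ are connected components of $Z$ corresponding to edges incident to the same vertex $v$ of $G$ — i.e.\ to the same connected component $W$ of $M\setminus Z$ — then the modular weight $v_{Z_1}$ vanishes if and only if $v_{Z_2}$ vanishes. Propagating this equivalence along the edges of the connected graph $G$ then forces the modular weights to be \emph{all} zero or \emph{all} nonzero.

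So fix such a vertex $v$ with adjacent component $W$, and suppose for contradiction that $v_{Z_1}=0$ while $v_{Z_2}\neq 0$. The idea is to exploit the rigidity of the nonzero case (Proposition~\ref{prop:localnonzero}) against the rigidity of the zero case (the Morse–Bott analysis of Section~\ref{subsec:allzero}). Pick $X\in\mathfrak{t}$ with $\langle X, v_{Z_2}\rangle\neq 0$; by Proposition~\ref{prop:localnonzero}(\ref{theonethatineed}) the Hamiltonian $H_X$ is a genuine $b$-function with a $c\log|t|$ singularity along $Z_2$, so $H_X$ is \emph{unbounded} on a neighborhood of $Z_2$, hence unbounded on $W$ (which is connected and has $Z_2$ in its closure). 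On the other hand, since $v_{Z_1}=0$ we may also consider the circle subgroup generated by a vector on which $v_{Z_1}$ vanishes; but the cleaner route is: the set of $Y\in\mathfrak{t}$ for which $H_Y$ is smooth near $Z_2$ is exactly $\mathfrak{t}_{Z_2}=\ker v_{Z_2}$, a codimension-one subspace, and similarly for $Z_1$ the kernel is all of $\mathfrak{t}$. I want to derive a contradiction from the coexistence, near the single component $W$, of a direction in which the moment map blows up logarithmically (toward $Z_2$) and the compactness/boundedness forced on $W$ coming from $Z_1$.

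Here is the mechanism I would make precise. Choose $X$ primitive with $\langle X,v_{Z_2}\rangle\neq 0$ and consider the circle $\SSS^1_X\subseteq\TT^k$ it generates, acting effectively on (a cover of) $M$; its Hamiltonian $H_X$ has a logarithmic singularity along $Z_2$. Now restrict attention to a symplectic leaf $\mathcal{L}_1\subset Z_1$, which is compact by hypothesis. Applying Proposition~\ref{prop:the thing} to a direction $Y\in\mathfrak{t}$ for which $H_Y$ is smooth (such $Y$ exist because $v_{Z_1}=0$ — indeed every direction works near $Z_1$) gives $H_Y(\mathcal{L}_1)=H_Y(M)$, so $H_Y$ is \emph{bounded} on all of $M$ for every $Y\in\mathfrak{t}_{Z_1}=\mathfrak{t}$. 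In particular $H_X$ is bounded on $M$, contradicting the logarithmic blow-up along $Z_2$ established above. The point is that $v_{Z_1}=0$ forces \emph{every} component of the moment map to be a smooth bounded function (via Corollary~\ref{prop:mu(M)=mu(Z)} applied to the would-be all-zero situation, or directly via Proposition~\ref{prop:the thing}), whereas $v_{Z_2}\neq 0$ forces \emph{some} component to be an unbounded $b$-function; these cannot both hold.

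The main obstacle is making the "boundedness from $v_{Z_1}=0$" step rigorous: Proposition~\ref{prop:the thing} and Corollary~\ref{prop:mu(M)=mu(Z)} are stated under the blanket hypothesis that \emph{all} modular weights vanish, so I cannot invoke them verbatim. What I actually need is the purely local statement that underlies them — that if $v_{Z_1}=0$ then $\iota_{Y^\#}\omega$ is exact with \emph{smooth} primitive $H_Y$ for all $Y$, and that the Morse–Bott argument of Proposition~\ref{prop:even} forces the leaf $\mathcal{L}_1$ to attain the global extrema of $H_Y$ — and that argument only used smoothness of $H_Y$ near $Z_1$, not near all of $Z$. So I would restate Proposition~\ref{prop:the thing} in the sharper form "$H_Y$ smooth near a compact leaf $\mathcal{L}$ of a component $Z_1$ of $Z$ $\Rightarrow$ $H_Y(\mathcal{L})=H_Y(M)$," observe its proof goes through unchanged, and then combine it with the log-blow-up from Proposition~\ref{prop:localnonzero}(\ref{theonethatineed}) to reach the contradiction. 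A secondary nuisance is the graph-propagation bookkeeping when $G$ is a cycle rather than a tree, but connectedness of $G$ handles that uniformly: any two edges are joined by a path of edges, along each step of which the zero/nonzero dichotomy is preserved.
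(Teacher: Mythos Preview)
Your overall contradiction --- ``$v_{Z_1}=0$ forces $H_X$ bounded, $v_{Z_2}\neq 0$ forces $H_X$ unbounded'' --- is exactly the paper's strategy. The gap is in how you propose to establish boundedness. You claim that the proof of Proposition~\ref{prop:the thing} ``goes through unchanged'' under the weaker hypothesis that $H_X$ is smooth only near $Z_1$. It does not. That proof invokes Proposition~\ref{prop:even} and Remark~\ref{rmk:not one} to conclude that the level set $H_X^{-1}(b)$ is \emph{connected}, and this connectedness statement requires $H_X$ to be a globally defined Morse--Bott function on the compact manifold $M$. If $H_X$ has a logarithmic singularity along $Z_2$, it is not even continuous there, and the Morse--Bott level-set argument simply does not apply. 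So the sharpened version of Proposition~\ref{prop:the thing} you propose is not available by the existing proof (and, read literally, is in fact the statement you are trying to contradict).

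The paper closes this gap with an idea you are missing: a symplectic cut. One chooses $X$ so that $H_X\to -\infty$ along $Z_2$, cuts $M$ at a regular level $-N$ with $-N<\min H_X(\mathcal{L}_1)$, and obtains a compact $b$-symplectic manifold $M_{\geq -N}$ on which the $\SSS^1$-action generated by $X$ now has a \emph{smooth} Hamiltonian. Proposition~\ref{prop:the thing} then applies verbatim to $M_{\geq -N}$ and gives $H_X(\mathcal{L}_1)=H_X(M_{\geq -N})$; since $-N\in H_X(M_{\geq -N})$ but $-N<\min H_X(\mathcal{L}_1)$, this is the contradiction. The point is not to localize the Morse--Bott argument but to excise the locus where it fails. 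Your adjacency-graph reduction to neighboring $Z_1,Z_2$ is harmless but unnecessary once the cut is in hand.
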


\begin{proof}
Assume towards a contradiction that there exist $Z_1$ and $Z_2$ two connected components of $Z$ such that  the modular weight of $Z_1$ is zero and the modular weight of $Z_2$ is $v_{Z_2} \neq 0$. Take any element $X \in \mathfrak{t}$ in the period lattice that pairs positively with $v_{Z_2}$ and pick a Hamiltonian $b$-function $H_X$ for the vector field $X^\#$. By definition of modular weight, this function will equal $-\infty$ at $Z_2$. But we can take a symplectic cut at an arbitrarily large negative number $-N$, and the $\SSS^1$-action given by $X^\#$ on the symplectic cut $M_{\geq -N}$ is now an action by a smooth Hamiltonian function, and therefore $H_X(\mathcal{L}) = H_X(M_{\geq -N})$ where $\mathcal{L}$ is a leaf inside $Z_1$. If we choose $-N$ smaller than the minimum of $H_X(\mathcal{L})$, this gives a contradiction.
\end{proof}

\section{Final remarks}\label{section final}

\subsection{$\mathbb T^k$-actions on generalizations of  $b$-symplectic manifolds}

We can generalize the definition of $b$-symplectic manifold by allowing the exceptional hypersurface $Z$ to have transverse self-intersections, instead of insisting that it is embedded. In keeping with the spirit of \cite{Melrose}, we  refer to these structures as $c$-symplectic -- $b$ for boundary, $c$ for corner. The simplest example of a $c$-symplectic manifold is the product of two $b$-symplectic manifolds, see \cite{gualtierietal} for more examples under the generic notation of log symplectic (for both $b$- and $c$-symplectic manifolds).

We have proved that for $b$-symplectic manifolds, zero and nonzero modular weights cannot coexist. To examine the $c$-symplectic case, consider the $c$-symplectic manifold $(M=\mathbb T_1^2\times\mathbb T_2^2,\omega_1+\omega_2)$ which is the product of two $b$-symplectic tori: $(\mathbb{T}_1^2,\omega_1=\frac{1}{\sin\theta_1}d\theta_1\wedge d\alpha_1)$ and $(\mathbb{T}_2^2,\omega_2=\frac{1}{\cos\theta_2}d\theta_2\wedge d\alpha_2)$.
The exceptional hypersurface of $(\mathbb{T}_1^2,\omega_1)$ consists of the union of the two circles $Z_1=\left\{\theta_1=0,\pi\right\}$, as does that of $(\mathbb{T}_2^2,\omega_2)$, with $Z_2=\left\{ \theta_2=\frac{\pi}{2},\frac{3\pi}{2}\right\}$. The exceptional hypersurface $Z$ of $M$ is the union $(Z_1\times\mathbb{T}^2_2)\cup(\mathbb{T}_1^2\times Z_2)$. This set has a stratification given by the rank of the Poisson structure: one stratum (the lowest-dimensional one) is the intersection $S_c=(Z_1\times\mathbb{T}^2_2)\cap(\mathbb{T}_1^2\times Z_2)$ and the other is the symmetric difference $(Z_1\times\mathbb{T}^2_2)\Delta(\mathbb{T}_1^2\times Z_2)$, where the $c$-symplectic structure is actually $b$-symplectic.

Consider the circle action on $M$ generated by $\frac{\partial}{\partial \alpha_1}$: its modular weight would be nonzero for the set $(Z_1\times\mathbb{T}^2_2)\setminus S_c$ but zero for the set $(\mathbb{T}_1^2\times Z_2)\setminus S_c$.

\subsection{ Deficiency-one torus actions and circle actions on four dimensional $b$-symplectic manifolds}

One could try to generalize the tools developed in \cite{KarshonTolman} to treat periodic  hamiltonian flows on $4$-dimensional $b$-symplectic manifolds. Possible applications would be to adapt the classification of $\SSS^1$-actions on $4$-symplectic manifolds \cite{Karshon} to the $b$-case, and to study when torus actions on $b$-symplectic manifolds extend to toric actions. In particular, it would be interesting to determine if there exists a $b$-analogue of the following result:

\begin{theorem} \cite{Karshon}
Every $4$-dimensional compact Hamiltonian $\SSS^1$-space with isolated fixed points comes from a K{\"a}hler toric manifold by restricting the action to a sub-circle.
\end{theorem}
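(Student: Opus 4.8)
The plan is to follow Karshon's classification of compact Hamiltonian $\SSS^1$-spaces in dimension four and to specialize it to the isolated-fixed-point case. The basic object is the moment map $\Phi\colon M\to\RR$; its reduced spaces $M_t:=\Phi^{-1}(t)/\SSS^1$ are two-dimensional symplectic spaces (orbifolds in general) for regular values $t$ and degenerate only at the finitely many critical values, which are exactly the values $\Phi(p)$ at the fixed points $p$. First I would record the local models at the fixed points: by the equivariant Darboux theorem the action linearizes near each $p$, with integer isotropy weights $(m,n)$ that are both nonzero precisely because $p$ is isolated, so locally the circle embeds in a torus $\TT^2$ acting in the standard toric fashion. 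At the two extremal fixed points (the global minimum and maximum of $\Phi$) the reduced space is a single point; at an interior fixed point, crossing the corresponding critical value changes $M_t$ by a (possibly weighted) blow-up or blow-down.

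Second, I would analyze the whole family $\{M_t\}$. Because the bottom reduced space is a point and each wall-crossing changes the topology in the controlled way above, every interior reduced space $M_t$ is a two-sphere (acquiring at most finitely many orbifold points, lying over isotropy spheres of $M$), and the orbit space $M/\SSS^1$ fibers over the interval $\Phi(M)$. It is here that the hypothesis of isolated fixed points is essential: it rules out the ``fat'' vertices (fixed surfaces) that otherwise appear in Karshon's decorated graph, so the combinatorial invariant reduces to a chain of $\mathbb{Z}_k$-spheres over an interval. Matching this with the local toric models exhibits $M$ as an iterated $\SSS^1$-equivariant blow-up of one of the standard models $\mathbb{CP}^2$, $\SSS^2\times\SSS^2$, or a Hirzebruch surface, each carrying its well-known toric action.

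Third, since every one of these minimal models is a K\"ahler toric manifold, and each intermediate blow-up can be carried out $\TT^2$-equivariantly (equivalently, the edge data assembled from the reduced-space analysis fit together into a Delzant polygon $\Delta$), the $\SSS^1$-action extends to an effective Hamiltonian $\TT^2$-action. By Delzant's theorem this toric manifold is determined up to equivariant symplectomorphism by $\Delta$, and the Delzant construction realizes it as a K\"ahler quotient of a linear $\TT^N$-action on $\CC^N$, hence as a K\"ahler toric manifold; restricting the $\TT^2$-action back to the original sub-circle recovers the given $\SSS^1$-space.

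The main obstacle is the passage from local to global in the second and third steps: showing that the local toric structures at the fixed points are mutually compatible, so that the $\SSS^1$-action really does extend to a toric $\TT^2$-action rather than merely doing so in a neighborhood of each fixed point. This is exactly the technical heart of Karshon's monograph --- one must track how the symplectic area and orbifold structure of $M_t$ evolve with $t$, verify that the resulting data define an admissible Delzant polygon, and invoke the uniqueness part of the classification (the graph, equivalently the polygon, is a complete invariant) to conclude that the extension exists and is essentially unique.
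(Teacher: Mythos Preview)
The paper does not prove this theorem at all: it is quoted verbatim from Karshon's monograph \cite{Karshon} in the ``Final remarks'' section, solely as motivation for asking whether a $b$-symplectic analogue exists. There is therefore no proof in the paper to compare your proposal against.

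That said, your outline is a reasonable summary of how Karshon's argument actually proceeds: the decorated-graph invariant, the fact that isolated fixed points force the graph to be a chain with no fat vertices, and the identification of such graphs with Delzant polygons so that the $\SSS^1$-action extends to a toric $\TT^2$-action. You are also right that the global compatibility of the local toric models is the nontrivial step, and that it is handled in \cite{Karshon} via the uniqueness theorem (the graph is a complete invariant). But none of this is carried out, or needed, in the present paper.
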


\subsection{Reduction and Duistermaat-Heckman theory}
In \cite{btoric} we describe the moment $b$-polytope as a fibration by one-dimensional $b$-polytopes
 ($b$-lines or $b$-circles, moment images of $b$-Hamiltonian $\mathbb{S}^1$-spaces) over the moment polytope $\Delta_\mathcal{L}$ of a leaf of $Z$.

In the general case of a $\mathbb T^k$-action  with non-zero  modular weights\footnote{When the  modular weights are zero this study reduces to the symplectic one in the codimension-one symplectic foliation of the exceptional hypersurface $Z$.}  we can also apply the reduction scheme with respect to a $\mathbb T^{k-1}$-action.  The corresponding $b$-Hamiltonian $\mathbb{S}^1$-spaces may vary depending on which \emph{chamber} of $\Delta$ they sit over -- a chamber is a connected component of $\Delta$ determined by regular values of the moment map. Observe that in the toric case there is only one such chamber, which is why all the fibers are of the same type: either all $b$-spheres or all $b$-tori. One could apply a similar scheme to the case of general torus actions and relate the diffeomorphism type of the fibers with the different types of chambers.



\end{document}